\documentclass[11pt]{article}
\usepackage{fontspec}
\usepackage[margin=1in]{geometry}
\usepackage{amsmath,amssymb,amsthm,array,booktabs}
\usepackage[explicit]{titlesec}

\newtheorem{thm}{Theorem}
\newtheorem{lem}[thm]{Lemma}
\newtheorem{cor}[thm]{Corollary}

\titleformat{\subsection}[runin]
  {\normalfont\large\bfseries}
  {\thesubsection}{1em}{#1.}
\titlespacing*{\subsection}{0pt}
  {3.25ex plus 1ex minus .2ex}{.6em}

\title{Non-derivability of Euclidean Division in
$\mathrm{PA}_{\mathrm{smu}}^{-}$}
\author{Naoya~Kato}
\date{}

\begin{document}

\maketitle

\begin{abstract}
$\mathrm{PA}_{\mathrm{smu}}^{-}$ is a weak theory of arithmetic obtained by
adding two principles concerning powers of two to basic axioms satisfied by
the nonnegative part of a discretely ordered ring. We introduce a theory
$\mathrm{PA}_{\mathrm{wit}}^{-}$ in which powers of two and the required
witnesses are represented by primitive symbols. We show that, along a fixed
sequence in the standard model of arithmetic, every one-variable term
eventually agrees with a polynomial over the dyadic rationals. A finite
avoidance lemma and the Compactness Theorem then yield a model of
$\mathrm{PA}_{\mathrm{smu}}^{-}$ in which division by $3$ fails.
In this model even the $n=3$ instance of the Standard Euclidean Division
Principle pa16 fails. In fact, the model can be chosen to satisfy every
universal $\mathcal L_0$-sentence true in the standard model. Consequently,
neither pa16 nor the Euclidean Division Principle pa17 is derivable even
after these universal truths are adjoined to
$\mathrm{PA}_{\mathrm{smu}}^{-}$.
\end{abstract}

\section{Introduction}

In Open Question 3.2, Visser asked whether
$\mathrm{PA}_{\mathrm{smu}}^{-}$ proves the Euclidean Division Principle
pa17~\cite{visser}. The purpose of this paper is to show that
\[
\mathrm{pa17}:\quad
\forall x\,\forall y\bigl(y\ne0\rightarrow
\exists q\,\exists r\,(r<y\land x=q\cdot y+r)\bigr)
\]
is not derivable from $\mathrm{PA}_{\mathrm{smu}}^{-}$. In fact, we
construct a model in which division by $3$ fails: it contains an element
that is not of the form $3a+j$ for any $a$ and any $j\in\{0,1,2\}$.
This also refutes the $n=3$ instance of the Standard Euclidean Division
Principle pa16. Thus the paper not only answers Visser's question
negatively, but also shows that $\mathrm{PA}_{\mathrm{smu}}^{-}$ does not
prove pa16. The construction gives the stronger result that the
countermodel may be chosen to satisfy every universal $\mathcal L_0$-sentence
true in the standard model of arithmetic.

To work with generated substructures, we represent being a power of two by
the quantifier-free equation $\operatorname{obst}(x)=1$, together with
function symbols for the witnesses required by pa15, pa19, pa20, and for
the correctness of this equation. The resulting theory is denoted by
$\mathrm{PA}_{\mathrm{wit}}^{-}$. We then fix the standard model of
arithmetic and represent the values of terms along a sequence of powers of
two by polynomials. This yields simultaneous avoidance for any finite
collection of terms, after which the Compactness Theorem completes the
construction. For general background on arithmetic and its model theory, see
Hájek--Pudlák~\cite{hajek-pudlak} and Kaye~\cite{kaye}.

\section{The base theory}

In this section, we fix the language and notation, recall the axioms of
$\mathrm{PA}_{\mathrm{smu}}^{-}$, and state the division principles
considered below.

\subsection{Language and notation}
Let the base language of arithmetic be
\[
\mathcal L_0=\{0,1,+,\cdot,\le\}.
\]
We use the following abbreviations throughout:
\begin{itemize}
\item $2:=1+1$ and $3:=2+1$;
\item $x<y\;:\Leftrightarrow\;x\le y\land x\ne y$;
\item $x\mid y\;:\Leftrightarrow\;\exists z\,(z\cdot x=y)$;
\item $\operatorname{pow}_2(x)\;:\Leftrightarrow\;
\forall y\bigl(y\mid x\rightarrow(y=1\lor 2\mid y)\bigr)$.
\end{itemize}

\subsection{The axioms of $\mathrm{PA}_{\mathrm{smu}}^{-}$}
$\mathrm{PA}_{\mathrm{smu}}^{-}$ is the $\mathcal L_0$-theory consisting of
pa1--pa13, pa15, pa19, and pa20 below. All free variables are understood to
be universally quantified.

\begin{align*}
\mathrm{pa1}:\quad &x+0=x,\\
\mathrm{pa2}:\quad &x+y=y+x,\\
\mathrm{pa3}:\quad &(x+y)+z=x+(y+z),\\
\mathrm{pa4}:\quad &x\cdot1=x,\\
\mathrm{pa5}:\quad &x\cdot y=y\cdot x,\\
\mathrm{pa6}:\quad &(x\cdot y)\cdot z=x\cdot(y\cdot z),\\
\mathrm{pa7}:\quad &x\cdot(y+z)=x\cdot y+x\cdot z,\\
\mathrm{pa8}:\quad &x\le y\lor y\le x,\\
\mathrm{pa9}:\quad &(x\le y\land y\le z)\rightarrow x\le z,\\
\mathrm{pa10}:\quad &x+1\not\le x,\\
\mathrm{pa11}:\quad &y\le x\rightarrow(y=x\lor y+1\le x),\\
\mathrm{pa12}:\quad &y\le x\rightarrow y+z\le x+z,\\
\mathrm{pa13}:\quad &y\le x\rightarrow y\cdot z\le x\cdot z,\\
\mathrm{pa15}:\quad &y\le x\rightarrow\exists z\,(x=y+z),\\
\mathrm{pa19}:\quad &\exists u\,
 \bigl(\operatorname{pow}_2(u)\land u\le x+1<2\cdot u\bigr),\\
\mathrm{pa20}:\quad &
 \bigl(\operatorname{pow}_2(x)\land
       \operatorname{pow}_2(y)\land x\le y\bigr)
 \rightarrow x\mid y.
\end{align*}

Write $\overline n$ for the $\mathcal L_0$-numeral denoting the standard
natural number $n$. The Standard Euclidean Division Principle pa16 is the
scheme consisting, for every $n\in\mathbb N\setminus\{0\}$, of the sentence
\[
\forall x\,\exists q\,\exists r\,
\bigl(r<\overline n\land x=q\cdot\overline n+r\bigr).
\]
We shall use, in particular, its $n=3$ instance
\begin{equation}
\tag{$\mathrm{pa16}_3$}
\forall x\,\exists q\,\exists r\,(r<3\land x=q\cdot3+r).
\end{equation}
The Euclidean Division Principle pa17 considered here is the sentence
\begin{equation}
\tag{pa17}
\forall x\,\forall y\bigl(y\ne0\rightarrow
\exists q\,\exists r\,(r<y\land x=q\cdot y+r)\bigr).
\end{equation}

We shall use the elementary consequences of pa1--pa13 that
\[
\forall x\,0\le x,
\qquad
3\ne0,
\qquad
r<3\rightarrow(r=0\lor r=1\lor r=2).
\]
Consequently, the
condition $r<y$ in pa17 is equivalent over the base theory to
$0\le r<y$. Thus our formulation is equivalent to the version in which
nonnegativity of the remainder is stated explicitly.

\section{The universal witness expansion $\mathrm{PA}_{\mathrm{wit}}^{-}$}

In this section, we introduce a universal expansion by witness functions
and establish its precise relation to $\mathrm{PA}_{\mathrm{smu}}^{-}$.

\subsection{Language}
The language of $\mathrm{PA}_{\mathrm{wit}}^{-}$ is
\[
\mathcal L_{\mathrm{wit}}=
\mathcal L_0\cup
\{\operatorname{sub},
  \operatorname{po},
  \operatorname{quo},
  \operatorname{half},
  \operatorname{obst},
  \operatorname{cof}\}.
\]
The arities and intended roles of the new symbols are as follows.

\[
\begin{array}{ccl}
\toprule
\text{symbol}&\text{arity}&\text{intended role}\\
\midrule
\operatorname{sub}&2&\text{cut-off subtraction}\\
\operatorname{po}&1&\text{a power-of-two witness for pa19}\\
\operatorname{quo}&2&\text{the quotient required by pa20, or $0$ otherwise}\\
\operatorname{half}&1&\text{the half of an even element, or $0$ otherwise}\\
\operatorname{obst}&1&\text{a divisor not divisible by $2$ (equal to $1$ for powers of two)}\\
\operatorname{cof}&1&\text{the cofactor corresponding to $\operatorname{obst}(x)$}\\
\bottomrule
\end{array}
\]

All the new symbols are total function symbols. Apart from the selection of
an obstruction divisor for a non-power of two, the axioms below determine
their intended values uniquely.

\subsection{Axioms}
$\mathrm{PA}_{\mathrm{wit}}^{-}$ is the $\mathcal L_{\mathrm{wit}}$-theory
consisting of the axioms listed below. All free variables are understood to
be universally quantified.

\subsubsection{Universal arithmetic axioms}

$\mathrm{PA}_{\mathrm{wit}}^{-}$ contains pa1--pa13.

\subsubsection{Cut-off subtraction}

\begin{equation}
\tag{$K_{\mathrm{sub}}$}
\begin{aligned}
y\le x&\rightarrow x=y+\operatorname{sub}(x,y),\\
x<y&\rightarrow\operatorname{sub}(x,y)=0.
\end{aligned}
\end{equation}

\subsubsection{The divisor condition in the power-of-two case}

\begin{equation}
\tag{$K_{\mathrm{half}}$}
\begin{aligned}
\operatorname{obst}(x)=1\land z\cdot y=x
&\rightarrow
\bigl(y=1\lor2\cdot\operatorname{half}(y)=y\bigr),\\
2\cdot w=y&\rightarrow\operatorname{half}(y)=w,\\
2\cdot\operatorname{half}(y)=y
&\lor\operatorname{half}(y)=0.
\end{aligned}
\end{equation}

If $\operatorname{obst}(x)=1$ and $y$ divides $x$, then either $y=1$ or
$\operatorname{half}(y)$ gives one half of $y$. The other two clauses make
$\operatorname{half}$ the unique half on even inputs and $0$ on odd inputs.

\subsubsection{The obstruction divisor and its cofactor}

\begin{align}
\tag{$K_{\mathrm{fac}}$}
&\operatorname{cof}(x)\cdot\operatorname{obst}(x)=x,\\
\tag{$K_{\mathrm{obst}}$}
&\operatorname{obst}(x)=1\lor
2\cdot w\ne\operatorname{obst}(x).
\end{align}

The axiom $K_{\mathrm{obst}}$ is universally quantified in $w$. Thus, if
$\operatorname{obst}(x)\ne1$, then $\operatorname{obst}(x)$ is a divisor of
$x$ different from $1$ and is not divisible by $2$.

\subsubsection{A witness for pa19}

\begin{equation}
\tag{$K_{\mathrm{po}}$}
\operatorname{obst}\bigl(\operatorname{po}(x)\bigr)=1
\land
\operatorname{po}(x)\le x+1
<2\cdot\operatorname{po}(x).
\end{equation}

\subsubsection{A witness for pa20}

\begin{equation}
\tag{$K_{\mathrm{quo}}$}
\begin{aligned}
\operatorname{obst}(s)=1\land \operatorname{obst}(r)=1\land s\le r
&\rightarrow\operatorname{quo}(s,r)\cdot s=r,\\
\neg\bigl(\operatorname{obst}(s)=1\land
\operatorname{obst}(r)=1\land s\le r\bigr)
&\rightarrow\operatorname{quo}(s,r)=0.
\end{aligned}
\end{equation}

We call the collection of pa1--pa13 and $K_{\mathrm{sub}}$,
$K_{\mathrm{half}}$, $K_{\mathrm{fac}}$, $K_{\mathrm{obst}}$, $K_{\mathrm{po}}$,
$K_{\mathrm{quo}}$ the theory $\mathrm{PA}_{\mathrm{wit}}^{-}$.

\subsection{The relation between $\operatorname{obst}(x)=1$ and $\operatorname{pow}_2$}
\begin{lem}\label{lem:power-test-correct}
In $\mathrm{PA}_{\mathrm{wit}}^{-}$,
\[
\operatorname{obst}(x)=1
\leftrightarrow\operatorname{pow}_2(x).
\]
\end{lem}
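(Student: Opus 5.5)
We prove the two directions separately, working in an arbitrary model $M$ of $\mathrm{PA}_{\mathrm{wit}}^{-}$ and unfolding $\operatorname{pow}_2(x)$ as $\forall y\,(y\mid x\rightarrow(y=1\lor 2\mid y))$, where $y\mid x$ means $\exists z\,(z\cdot y=x)$.

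\emph{Left to right.} Assume $\operatorname{obst}(x)=1$, and let $y\mid x$ be witnessed by $z$ with $z\cdot y=x$. The first clause of $K_{\mathrm{half}}$ applies directly and gives $y=1$ or $2\cdot\operatorname{half}(y)=y$. In the second case $\operatorname{half}(y)$ witnesses $2\mid y$, after commuting the product by pa5 if the divisibility convention requires $z\cdot 2$ rather than $2\cdot z$. Hence $\operatorname{pow}_2(x)$ holds. This direction is immediate.

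\emph{Right to left.} Assume $\operatorname{pow}_2(x)$ and suppose, for contradiction, that $\operatorname{obst}(x)\ne1$. By $K_{\mathrm{fac}}$, $\operatorname{cof}(x)\cdot\operatorname{obst}(x)=x$, so $\operatorname{obst}(x)\mid x$. Then $\operatorname{pow}_2(x)$ yields $\operatorname{obst}(x)=1$ or $2\mid\operatorname{obst}(x)$. The first alternative is excluded by assumption, so some $w$ satisfies $w\cdot 2=\operatorname{obst}(x)$. By pa5 this gives $2\cdot w=\operatorname{obst}(x)$, which contradicts $K_{\mathrm{obst}}$ instantiated at this $w$, since that axiom says $\operatorname{obst}(x)=1$ or $2\cdot w\ne\operatorname{obst}(x)$ for every $w$. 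Therefore $\operatorname{obst}(x)=1$.

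No step is a real obstacle. The only care needed is matching the order of factors in the definition of $\mid$ with the forms $z\cdot y$ and $2\cdot w$ used in the axioms, and pa5 handles this. Neither the remaining axioms nor any order-theoretic facts from pa8--pa13 are needed.
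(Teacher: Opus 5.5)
Your proof is correct and matches the paper's argument: the forward direction uses the first clause of $K_{\mathrm{half}}$, and the reverse direction uses $K_{\mathrm{fac}}$ to make $\operatorname{obst}(x)$ a divisor of $x$ and $K_{\mathrm{obst}}$ (with pa5 to reorder factors) to rule out $2\mid\operatorname{obst}(x)$. The only cosmetic difference is that you phrase the second direction as a proof by contradiction, whereas the paper proves the contrapositive $\operatorname{obst}(x)\ne1\rightarrow\neg\operatorname{pow}_2(x)$.
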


\begin{proof}
First suppose $\operatorname{obst}(x)=1$ and $y\mid x$. Then
$z\cdot y=x$ for some $z$. By $K_{\mathrm{half}}$, either $y=1$ or
\[
2\cdot\operatorname{half}(y)=y.
\]
Thus $y=1$ or $2\mid y$, proving $\operatorname{pow}_2(x)$.

Next suppose $\operatorname{obst}(x)\ne1$. By $K_{\mathrm{fac}}$,
\[
\operatorname{cof}(x)\cdot\operatorname{obst}(x)=x,
\]
so $\operatorname{obst}(x)$ is a divisor of $x$ different from $1$. If
$2\mid\operatorname{obst}(x)$, then for
some $w$ we have $w\cdot2=\operatorname{obst}(x)$, hence
$2\cdot w=\operatorname{obst}(x)$, contradicting $K_{\mathrm{obst}}$. Therefore
$2\nmid\operatorname{obst}(x)$, and $\neg\operatorname{pow}_2(x)$ follows.

Taking the contrapositive gives
$\operatorname{pow}_2(x)\rightarrow\operatorname{obst}(x)=1$.
\end{proof}

\subsection{Relation to $\mathrm{PA}_{\mathrm{smu}}^{-}$}
\begin{lem}\label{lem:reduct}
The following hold.
\begin{enumerate}
\item[(1)] The $\mathcal L_0$-reduct of every model of
$\mathrm{PA}_{\mathrm{wit}}^{-}$ is a model of
$\mathrm{PA}_{\mathrm{smu}}^{-}$.

\item[(2)] Every model of $\mathrm{PA}_{\mathrm{smu}}^{-}$ can be expanded to a
model of $\mathrm{PA}_{\mathrm{wit}}^{-}$.
\end{enumerate}
Consequently, $\mathrm{PA}_{\mathrm{wit}}^{-}$ is a model-preserving
expansion of $\mathrm{PA}_{\mathrm{smu}}^{-}$ by witness functions.
\end{lem}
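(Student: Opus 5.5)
For (1), I will fix a model $M\models\mathrm{PA}_{\mathrm{wit}}^{-}$ and check the axioms of $\mathrm{PA}_{\mathrm{smu}}^{-}$ one by one in its $\mathcal L_0$-reduct. Axioms pa1--pa13 are already among the axioms of $\mathrm{PA}_{\mathrm{wit}}^{-}$. For pa15, given $y\le x$, the first clause of $K_{\mathrm{sub}}$ gives $x=y+\operatorname{sub}(x,y)$, so $z:=\operatorname{sub}(x,y)$ is a witness. For pa19, I take $u:=\operatorname{po}(x)$. Then $K_{\mathrm{po}}$ gives $\operatorname{obst}(u)=1$ and $u\le x+1<2\cdot u$, and Lemma~\ref{lem:power-test-correct} converts $\operatorname{obst}(u)=1$ into $\operatorname{pow}_2(u)$. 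For pa20, suppose $\operatorname{pow}_2(x)$, $\operatorname{pow}_2(y)$ and $x\le y$. Lemma~\ref{lem:power-test-correct} gives $\operatorname{obst}(x)=\operatorname{obst}(y)=1$, so the first clause of $K_{\mathrm{quo}}$ gives $\operatorname{quo}(x,y)\cdot x=y$, hence $x\mid y$. Since Lemma~\ref{lem:power-test-correct} was proved in $\mathrm{PA}_{\mathrm{wit}}^{-}$, it holds in $M$, and part (1) is done.

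For (2), I will fix $N\models\mathrm{PA}_{\mathrm{smu}}^{-}$ and define the new functions explicitly, using choice where a witness is not unique.

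$\operatorname{sub}(x,y)$: if $y\le x$, some $z$ with $x=y+z$ (by pa15); otherwise $0$.

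$\operatorname{half}(y)$: the $w$ with $2\cdot w=y$ if one exists; otherwise $0$.

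$\operatorname{obst}$ and $\operatorname{cof}$: if $\operatorname{pow}_2(x)$, set $\operatorname{obst}(x)=1$ and $\operatorname{cof}(x)=x$. Otherwise, unfolding $\neg\operatorname{pow}_2(x)$ gives a $y$ with $y\mid x$, $y\ne1$ and $2\nmid y$. Set $\operatorname{obst}(x)=y$ and $\operatorname{cof}(x)=z$ for some $z$ with $z\cdot y=x$.

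$\operatorname{po}(x)$: a $u$ as in pa19.

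$\operatorname{quo}(s,r)$: if $\operatorname{pow}_2(s)$, $\operatorname{pow}_2(r)$ and $s\le r$, some $q$ with $q\cdot s=r$ (from pa20, using pa5 to reorder the product); otherwise $0$.

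The key observation is that in this expansion $\operatorname{obst}(x)=1$ holds exactly when $\operatorname{pow}_2(x)$ does, because in the non-power case $\operatorname{obst}(x)=y\ne1$. With this, each axiom can be checked directly.

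$K_{\mathrm{fac}}$ holds in the power case by pa4 and pa5, and by construction otherwise.

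$K_{\mathrm{obst}}$: when $\operatorname{obst}(x)\ne1$, a $w$ with $2\cdot w=\operatorname{obst}(x)$ would make $2\mid\operatorname{obst}(x)$ after commuting by pa5, which is excluded.

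$K_{\mathrm{po}}$ and $K_{\mathrm{quo}}$ follow from the key observation together with pa19 and pa20.

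$K_{\mathrm{half}}$, first clause: if $\operatorname{obst}(x)=1$ then $\operatorname{pow}_2(x)$. So a $y$ with $z\cdot y=x$ divides $x$, and therefore $y=1$ or $2\mid y$. In the latter case $\operatorname{half}(y)$ was chosen with $2\cdot\operatorname{half}(y)=y$.

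$K_{\mathrm{half}}$, second clause, needs uniqueness of halves: from $2w=2w'$ infer $w=w'$. I get this from the order axioms. If $w<w'$, then pa11 gives $w+1\le w'$, and pa12, pa13 and distributivity give $2w+2\le 2w'=2w$. Since $2w\le 2w+2$ (from $0\le2$ and pa12), this yields $2w+1\le 2w$, contradicting pa10. The case $w'<w$ is symmetric. The third clause is then immediate from the definition of $\operatorname{half}$.

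I expect this uniqueness argument to be the only real obstacle; the rest is routine unfolding. The final sentence of the lemma is just the conjunction of (1) and (2).
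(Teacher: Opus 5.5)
Your proposal is correct and follows essentially the paper's proof. Part (1) is the same verification via Lemma~\ref{lem:power-test-correct} and the witness axioms. For part (2) you build the same expansion; you choose witnesses where the paper instead notes they are unique, and you correctly isolate uniqueness of halves (the second clause of $K_{\mathrm{half}}$) as the only uniqueness the axioms demand.

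One small slip in that uniqueness argument: from $2w+2\le 2w$ the auxiliary inequality you need is $2w\le 2w+1$ (from $0\le 1$ and pa12), not $2w\le 2w+2$. Then pa9 gives $(2w+1)+1\le 2w+1$, contradicting pa10.
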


\begin{proof}
Part (1) follows immediately from Lemma~\ref{lem:power-test-correct} and
the witness axioms: $K_{\mathrm{sub}}$, $K_{\mathrm{po}}$, and
$K_{\mathrm{quo}}$ provide witnesses for pa15, pa19, and pa20,
respectively.

For part (2), let $\mathcal M$ be a model of
$\mathrm{PA}_{\mathrm{smu}}^{-}$. The required differences, halves,
quotients, and cofactors corresponding to fixed nonzero divisors are unique
by the cancellation consequences of pa1--pa13, and the pa19 witness is
unique by pa20 and its defining inequalities. Indeed,
if $u\le v$ are two pa19 witnesses for the same $x$ and $u\ne v$, then
pa20 and discreteness give $2\cdot u\le v$, contradicting
$v\le x+1<2\cdot u$. Interpret
$\operatorname{sub}$ as cut-off subtraction, and use the corresponding
unique values for $\operatorname{half}$, $\operatorname{quo}$, and
$\operatorname{po}$ where applicable and $0$ otherwise. If
$\operatorname{pow}_2(x)$, set $\operatorname{obst}(x)=1$
and $\operatorname{cof}(x)=x$. Otherwise, choose an element $d_x$ such
that $d_x\mid x$, $d_x\ne1$, and $2\nmid d_x$, and set
$\operatorname{obst}(x)=d_x$. Let $\operatorname{cof}(x)$ be the unique
$c_x$ satisfying $c_x\cdot d_x=x$. Such a divisor $d_x$ exists by the
negation of $\operatorname{pow}_2(x)$. Thus Choice is used only to select
the obstruction divisors $d_x$. The resulting total functions give an
expansion of $\mathcal M$ satisfying $\mathrm{PA}_{\mathrm{wit}}^{-}$.
\end{proof}

\subsubsection{Substructures}

All axioms of $\mathrm{PA}_{\mathrm{wit}}^{-}$ are universal, and all the
new symbols are function symbols. Therefore any substructure closed under
all $\mathcal L_{\mathrm{wit}}$-functions is again a model of
$\mathrm{PA}_{\mathrm{wit}}^{-}$. Within that
substructure we still have
\[
\operatorname{obst}(x)=1\leftrightarrow\operatorname{pow}_2(x),
\]
so its $\mathcal L_0$-reduct is a model of
$\mathrm{PA}_{\mathrm{smu}}^{-}$.

\section{The standard model and the power-of-two sequence}

Let the sublanguage obtained by omitting $\operatorname{obst}$ and
$\operatorname{cof}$ be
\[
\mathcal L_{\mathrm{wit}}^{(0)}=
\mathcal L_0\cup
\{\operatorname{sub},
  \operatorname{po},
  \operatorname{quo},
  \operatorname{half}\}.
\]

Let
\[
\mathfrak N_0=(\mathbb N;0,1,+,\cdot,\le)
\]
be the $\mathcal L_0$-structure on the standard natural numbers, with all
symbols interpreted in the usual way.

We now define an $\mathcal L_{\mathrm{wit}}^{(0)}$-expansion
\[
\mathfrak N_{\mathrm{wit}}^{(0)}
\]
of $\mathfrak N_0$. First interpret $\operatorname{po}$ and
$\operatorname{quo}$ by
\[
\operatorname{po}^{\mathfrak N_{\mathrm{wit}}^{(0)}}(x)=
\max\{2^n\mid 2^n\le x+1\},
\]
\[
\operatorname{quo}^{\mathfrak N_{\mathrm{wit}}^{(0)}}(s,r)=
\begin{cases}
r/s&\text{if }s=2^m, r=2^n\text{ for some }m,n\in\mathbb N
                   \text{ and }s\le r,\\
0&\text{otherwise},
\end{cases}
\]
Interpret $\operatorname{sub}^{\mathfrak N_{\mathrm{wit}}^{(0)}}$ as
cut-off subtraction: its value at $(x,y)$ is $x-y$ if $y\le x$, and $0$
otherwise. Interpret
$\operatorname{half}^{\mathfrak N_{\mathrm{wit}}^{(0)}}(y)$ as $y/2$ if
$2\mid y$, and as $0$ otherwise. The quotients appearing in the definitions
of $\operatorname{quo}$ and $\operatorname{half}$ are standard natural
numbers whenever they are used.

Unless otherwise stated, terms and formulas in this section are evaluated
in $\mathfrak N_{\mathrm{wit}}^{(0)}$.

Let the ring of dyadic rationals be
\[
\mathbb D=\mathbb Z[1/2]
=\left\{\frac{a}{2^n}\;\middle|\;a\in\mathbb Z,
n\in\mathbb N\right\}.
\]

\subsection{The canonical expansion by an obstruction divisor and a power-of-two part}
Every nonzero $a\in\mathbb D$ can be written uniquely as
\[
a=2^e m,
\]
with $e\in\mathbb Z$ and $m\in\mathbb Z$ odd. Write $\nu_2(a)=e$. For
positive natural numbers this agrees with the usual $2$-adic valuation.

Define total functions $\operatorname{obst}_*$ and $\operatorname{cof}_*$
on the standard natural numbers by
\[
\operatorname{obst}_*(0)=3,
\qquad
\operatorname{cof}_*(0)=0,
\]
and, for $n\ge1$, by
\[
\operatorname{obst}_*(n)
=\frac{n}{2^{\nu_2(n)}},
\qquad
\operatorname{cof}_*(n)=2^{\nu_2(n)}.
\]
Thus $\operatorname{obst}_*(n)$ is the odd part of $n$, and
$\operatorname{cof}_*(n)$ is its power-of-two part.
Note that this canonical definition relies on the standard factorization
of natural numbers and is not assumed for arbitrary models of
$\mathrm{PA}_{\mathrm{smu}}^{-}$.
In particular,
\[
\operatorname{obst}_*(n)=1
\quad\Longleftrightarrow\quad
n=2^m\text{ for some }m\in\mathbb N.
\]

\begin{lem}[Canonical expansion by an obstruction divisor and a power-of-two part]\label{lem:canonical-obst-cof-extension}
Let $\mathfrak N_{\mathrm{wit}}^*$ be the expansion of
$\mathfrak N_{\mathrm{wit}}^{(0)}$ obtained by setting
\[
\operatorname{obst}=\operatorname{obst}_*,
\qquad
\operatorname{cof}=\operatorname{cof}_*.
\]
Then $\mathfrak N_{\mathrm{wit}}^*$ is a model of
$\mathrm{PA}_{\mathrm{wit}}^{-}$.
\end{lem}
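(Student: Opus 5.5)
The plan is a direct axiom-by-axiom check in the standard model. All axioms of $\mathrm{PA}_{\mathrm{wit}}^{-}$ are universal, so it suffices to verify each one for arbitrary standard natural numbers. The axioms pa1--pa13 hold in $\mathfrak N_0$, and hence in any expansion of it. $K_{\mathrm{sub}}$ is immediate from the definition of cut-off subtraction. The second and third clauses of $K_{\mathrm{half}}$ are immediate from the definition of $\operatorname{half}$ as $y/2$ on even inputs and $0$ on odd ones: if $2w=y$ then $y$ is even and $y/2=w$.

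The first step is to check the two factorization axioms.
\begin{itemize}
\item For $K_{\mathrm{fac}}$: when $n\ge1$ we have $\operatorname{cof}_*(n)\cdot\operatorname{obst}_*(n)=2^{\nu_2(n)}\cdot n/2^{\nu_2(n)}=n$, and when $n=0$ we have $0\cdot3=0$.
\item For $K_{\mathrm{obst}}$: $\operatorname{obst}_*(n)$ is odd in every case (it is the odd part of $n$ for $n\ge1$, and $3$ for $n=0$), so it never equals $2w$.
\end{itemize}
This is exactly why the value at $0$ was chosen to be $3$ rather than $1$: $0$ is not a power of two, so $\operatorname{obst}_*(0)\ne1$ is needed later.

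Next I would record the key equivalence used everywhere else: $\operatorname{obst}_*(n)=1$ holds iff $n=2^m$ for some $m$. This was noted before the lemma, and $0$ is correctly excluded.

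With this equivalence in hand the remaining axioms can be checked.
\begin{itemize}
\item First clause of $K_{\mathrm{half}}$: if $x=2^m$ and $zy=x$, then $y$ divides $2^m$. By unique factorization $y=2^k$, so either $y=1$ or $y$ is even and $2\cdot\operatorname{half}(y)=y$.
\item $K_{\mathrm{po}}$: the set $\{2^n : 2^n\le x+1\}$ is nonempty (it contains $1$) and finite, so the maximum $2^n$ exists. Its value satisfies $\operatorname{obst}_*=1$, the inequality $2^n\le x+1$, and, by maximality, $x+1<2^{n+1}$.
\item $K_{\mathrm{quo}}$: the hypothesis $\operatorname{obst}(s)=\operatorname{obst}(r)=1\wedge s\le r$ translates exactly to $s=2^m$, $r=2^n$ and $s\le r$. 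Then $m\le n$, so $r/s=2^{n-m}$ is a natural number with $\operatorname{quo}(s,r)\cdot s=r$. In the complementary case the definition gives $0$, as required.
\end{itemize}

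I expect no real obstacle. The only point needing care is the translation between ``$\operatorname{obst}=1$'' and the explicit ``$=2^m$'' conditions in the definitions of $\operatorname{quo}$ and $\operatorname{po}$, including the edge case $n=0$, which the equivalence above settles.
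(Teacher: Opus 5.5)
Your proposal is correct and is essentially the paper's proof. Both are a direct axiom-by-axiom check in the standard model, with the equivalence $\operatorname{obst}_*(n)=1 \Longleftrightarrow n=2^m$ (valid also at $n=0$) driving the verification of $K_{\mathrm{half}}$, $K_{\mathrm{po}}$, and $K_{\mathrm{quo}}$; you just spell out details the paper calls immediate, such as the $n=0$ case of $K_{\mathrm{fac}}$ and the oddness of $\operatorname{obst}_*$ for $K_{\mathrm{obst}}$.
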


\begin{proof}
pa1--pa13 hold on the standard natural numbers, while
$K_{\mathrm{sub}}$, $K_{\mathrm{fac}}$, and $K_{\mathrm{obst}}$ follow
immediately from the definitions.

Suppose $\operatorname{obst}(x)=1$, $z\cdot y=x$, and $y\ne1$.
Then $x$ is a power of two, so $2\mid y$ and $y/2\in\mathbb N$. Hence
$\operatorname{half}(y)=y/2$, proving $K_{\mathrm{half}}$.
The other two clauses of $K_{\mathrm{half}}$ follow directly from the
definition of $\operatorname{half}$ on even and odd inputs.

Let $u=\operatorname{po}(x)$. Since $u$ is the largest power of two not
exceeding $x+1$, we have $\operatorname{obst}(u)=1$ and $u\le x+1$.
If $2\cdot u\le x+1$, then
$2\cdot u$ would be a larger power of two not exceeding $x+1$, a
contradiction. Thus $x+1<2\cdot u$, proving $K_{\mathrm{po}}$. If
$\operatorname{obst}(s)=\operatorname{obst}(r)=1$ and $s\le r$,
then $s,r$ are powers of two and $r/s\in\mathbb N$. Hence
$\operatorname{quo}(s,r)=r/s$, proving the first clause of
$K_{\mathrm{quo}}$. Its second clause follows directly from the default
value $0$ in the definition of $\operatorname{quo}$.
\end{proof}

\subsection{Valuation and the power-of-two test along the power-of-two sequence}
Put
\[
\mathrm U_q=2^q
\qquad(q\in\mathbb N),
\]
and, for $q\ge2$, put
\[
\mathrm c_q=\frac54\mathrm U_q=5\cdot2^{q-2}.
\]
We call $(\mathrm U_q)_{q\in\mathbb N}$ the power-of-two sequence.

\begin{lem}[The lowest-degree term along the power-of-two sequence]\label{lem:power-sequence-valuation}
Let
\[
Q(T)=\sum_{r=h}^{d}a_{r}T^{r}\in\mathbb D[T],
\qquad
a_h\ne0
\]
be nonzero, where $h$ is the least degree with nonzero coefficient, and
put $e=\nu_2(a_h)$. Then, for all sufficiently large $q$, if $Q(\mathrm U_q)$ is
an integer, then
\[
\nu_2\bigl(Q(\mathrm U_q)\bigr)=e+qh.
\]
\end{lem}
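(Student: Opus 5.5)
The plan is to use the ultrametric (non-archimedean) property of the $2$-adic valuation $\nu_2$ on $\mathbb D$. First I extend $\nu_2$ to all of $\mathbb D$ by declaring $\nu_2(0)=+\infty$. Then I record the two standard facts: $\nu_2(ab)=\nu_2(a)+\nu_2(b)$, and $\nu_2(a+b)\ge\min(\nu_2(a),\nu_2(b))$, with equality when $\nu_2(a)\ne\nu_2(b)$. Both follow directly from the unique representation $a=2^e m$ with $m$ odd. Concretely, if $a=2^{e}m$ and $b=2^{f}n$ with $e<f$, then $a+b=2^{e}(m+2^{f-e}n)$, and $m+2^{f-e}n$ is odd.

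Next I evaluate each monomial at $T=\mathrm U_q=2^q$. For each $r$ with $a_r\ne0$,
\[
\nu_2\bigl(a_r\mathrm U_q^{\,r}\bigr)=\nu_2(a_r)+qr .
\]
For the lowest term this gives $e+qh$. For $r>h$ it gives $\nu_2(a_r)+qr$. Among the finitely many $r$ with $h<r\le d$ and $a_r\ne0$, put
\[
M=\max\bigl(e-\nu_2(a_r)\bigr),
\]
taking $M=0$ if there is no such $r$. If $q>M$, then for each such $r$,
\[
\nu_2(a_r)+qr\ \ge\ \nu_2(a_r)+q(h+1)\ >\ e+qh,
\]
since $q>e-\nu_2(a_r)$. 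So the lowest-degree monomial strictly dominates every other nonzero monomial in $2$-adic valuation.

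Now I add the terms. Group $S=\sum_{r>h}a_r\mathrm U_q^{\,r}$. By the non-strict ultrametric inequality, $\nu_2(S)>e+qh$, including the case $S=0$. The strict case then gives
\[
\nu_2\bigl(Q(\mathrm U_q)\bigr)=\nu_2\bigl(a_h\mathrm U_q^{\,h}+S\bigr)=e+qh .
\]
In particular $Q(\mathrm U_q)\ne0$. This holds for every $q>M$, regardless of integrality. The hypothesis that $Q(\mathrm U_q)$ is an integer is needed only so that $\nu_2$ is the usual $2$-adic valuation of a natural number, or integer, as the lemma is later used. The only mild point requiring care is the choice of threshold, namely $q>M$ uniformly over the finitely many coefficients. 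There is no real obstacle beyond stating the ultrametric property cleanly for $\mathbb D$.
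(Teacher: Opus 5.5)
Your proof is correct and is essentially the paper's argument. The paper divides $Q(\mathrm U_q)$ by $2^{e}\mathrm U_q^{h}$ and notes that the quotient is the odd integer $m$ plus terms that are even integers for large $q$; your ultrametric domination of the lowest-degree monomial is the same "odd plus even is odd" observation. You are also right that integrality of $Q(\mathrm U_q)$ is not really needed, and that $Q(\mathrm U_q)\ne0$ follows automatically.
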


\begin{proof}
Since $Q$ is nonzero, $Q(\mathrm U_q)\ne0$ for all sufficiently large $q$. Write
$a_h=2^e m$ with $m$ odd. Then
\[
\frac{Q(\mathrm U_q)}{2^{e}\mathrm U_q^{h}}
=m+
\sum_{r=h+1}^{d}2^{-e}a_{r}\mathrm U_q^{r-h}.
\]
Because every index in the sum satisfies $r>h$, each summand in
$\sum_{r=h+1}^{d}2^{-e}a_{r}\mathrm U_q^{r-h}$ is an even integer once $q$ is
sufficiently large. Since $m$ is odd, the entire right-hand side is odd,
and the conclusion follows.
\end{proof}

\begin{cor}[Obstruction divisors and power-of-two parts along the power-of-two sequence]\label{cor:power-sequence-obst-cof}
Let $Q(T)\in\mathbb D[T]$, and suppose $Q(\mathrm U_q)\in\mathbb N$ for all
sufficiently large $q$.

If $Q=0$, then
\[
\operatorname{obst}_*(Q(\mathrm U_q))=3,
\qquad
\operatorname{cof}_*(Q(\mathrm U_q))=0.
\]
If $Q\ne0$, let $h$ be its lowest degree, let $a_h$ be the corresponding
coefficient, and put $e=\nu_2(a_h)$. Then, for all sufficiently large $q$,
\[
\operatorname{obst}_*(Q(\mathrm U_q))
=2^{-e}\mathrm U_q^{-h}Q(\mathrm U_q),
\qquad
\operatorname{cof}_*(Q(\mathrm U_q))
=2^{e}\mathrm U_q^{h}.
\]
In particular, both right-hand sides are values at $\mathrm U_q$ of polynomials in
$\mathbb D[T]$.
\end{cor}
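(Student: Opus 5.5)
The plan is to treat the two cases separately and to reduce the case $Q\ne0$ to Lemma~\ref{lem:power-sequence-valuation}.

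If $Q=0$, then $Q(\mathrm U_q)=0$ for every $q$. The claim then follows directly from the definitions $\operatorname{obst}_*(0)=3$ and $\operatorname{cof}_*(0)=0$, and no largeness condition on $q$ is needed.

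Now suppose $Q\ne0$, with lowest degree $h$, lowest coefficient $a_h$ and $e=\nu_2(a_h)$.

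\emph{Positivity.} A nonzero polynomial has only finitely many roots, and $\mathrm U_q\to\infty$. So for all sufficiently large $q$ we have $Q(\mathrm U_q)\ne0$. Together with the hypothesis $Q(\mathrm U_q)\in\mathbb N$, this gives $Q(\mathrm U_q)\ge1$ for large $q$, so the branch $n\ge1$ of the definitions of $\operatorname{obst}_*$ and $\operatorname{cof}_*$ applies.

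\emph{Valuation.} For all sufficiently large $q$, $Q(\mathrm U_q)$ is an integer, so Lemma~\ref{lem:power-sequence-valuation} gives $\nu_2(Q(\mathrm U_q))=e+qh$. Since $\mathrm U_q=2^q$, we get $2^{\nu_2(Q(\mathrm U_q))}=2^{e}\mathrm U_q^{h}$. Substituting into the definitions yields $\operatorname{cof}_*(Q(\mathrm U_q))=2^{e}\mathrm U_q^{h}$ and $\operatorname{obst}_*(Q(\mathrm U_q))=Q(\mathrm U_q)/(2^{e}\mathrm U_q^{h})=2^{-e}\mathrm U_q^{-h}Q(\mathrm U_q)$. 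We take the intersection of the finitely many ``sufficiently large'' thresholds, namely the one from the hypothesis, the one for positivity, and the one from the lemma.

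\emph{Polynomial form.} The cofactor is the value at $\mathrm U_q$ of the monomial $2^{e}T^{h}\in\mathbb D[T]$; note that $e$ may be negative, but $2^e\in\mathbb D$. For the obstruction divisor, $h$ is the least degree with nonzero coefficient, so $T^{h}$ divides $Q$ in $\mathbb D[T]$. Hence $2^{-e}T^{-h}Q(T)=\sum_{r=h}^{d}2^{-e}a_rT^{r-h}\in\mathbb D[T]$, and its value at $\mathrm U_q$ is the stated expression.

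The only point needing any care is the interplay of the largeness thresholds, together with the observation that dividing by $T^h$ stays inside $\mathbb D[T]$ because $h$ is the lowest degree. I do not expect a real obstacle here, since the corollary is essentially an unpacking of Lemma~\ref{lem:power-sequence-valuation}.
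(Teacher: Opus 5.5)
Your proposal is correct and follows the paper's argument: you get eventual positivity of $Q(\mathrm U_q)$ from the finiteness of roots, read off $\nu_2(Q(\mathrm U_q))=e+qh$ from Lemma~\ref{lem:power-sequence-valuation}, and use that $h$ is the lowest degree so that $2^{-e}T^{-h}Q(T)\in\mathbb D[T]$. You also spell out details the paper leaves implicit, namely the trivial case $Q=0$, combining the thresholds, and the fact that $2^{e}\in\mathbb D$ when $e<0$.
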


\begin{proof}
If $Q\ne0$, then $Q(\mathrm U_q)>0$ for all sufficiently large $q$. The formulas
follow from Lemma~\ref{lem:power-sequence-valuation}. Since $h$ is the lowest
degree, $2^{-e}T^{-h}Q(T)$ belongs to $\mathbb D[T]$.
\end{proof}

\begin{cor}[Stability of the power-of-two test along the power-of-two sequence]\label{cor:power-sequence-power-dichotomy}
Let $Q(T)\in\mathbb D[T]$, and suppose $Q(\mathrm U_q)\in\mathbb N$ for all
sufficiently large $q$. Then the truth value of
$\operatorname{obst}_*(Q(\mathrm U_q))=1$ is constant for all sufficiently large $q$.

Moreover, if this equation holds for all sufficiently large $q$, then for
some $e\in\mathbb Z$ and $h\in\mathbb N$,
\[
Q(T)=2^{e}T^{h}.
\]
If $h=0$, then $e\ge0$.
\end{cor}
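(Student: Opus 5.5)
The plan is to split according to whether $Q$ is zero and, when it is not, to read everything off the explicit formula of Corollary~\ref{cor:power-sequence-obst-cof}.

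\emph{Case $Q=0$.} Here $Q(\mathrm U_q)=0$ for every $q$, and by definition $\operatorname{obst}_*(0)=3\ne1$. So the equation fails for every $q$, and its truth value is constant (false). The ``moreover'' clause is then vacuous.

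\emph{Case $Q\ne0$.} Let $h$ be the lowest degree of $Q$, let $a_h$ be the corresponding coefficient, and put $e=\nu_2(a_h)$. Define
\[
R(T)=2^{-e}T^{-h}Q(T)\in\mathbb D[T].
\]
By Corollary~\ref{cor:power-sequence-obst-cof}, for all sufficiently large $q$ we have $\operatorname{obst}_*(Q(\mathrm U_q))=R(\mathrm U_q)$. Hence the equation $\operatorname{obst}_*(Q(\mathrm U_q))=1$ holds exactly when $(R-1)(\mathrm U_q)=0$.

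\begin{itemize}
\item If $R=1$ as a polynomial, the equation holds for every large $q$.
\item If $R\ne1$, then $R-1$ is a nonzero polynomial and has only finitely many roots. Since $\mathrm U_q\to\infty$, the value $(R-1)(\mathrm U_q)$ is nonzero for all large $q$, so the equation fails eventually.
\end{itemize}
In either subcase the truth value is eventually constant.

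For the ``moreover'' part, suppose the equation holds for all sufficiently large $q$. By the previous paragraph we must have $Q\ne0$, and the subcase $R\ne1$ is excluded, so $R=1$ identically. This gives $Q(T)=2^{e}T^{h}$ with $h\in\mathbb N$ and $e\in\mathbb Z$.

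It remains to show $e\ge0$ when $h=0$. In that case $Q$ is the constant $2^{e}$. By hypothesis $Q(\mathrm U_q)\in\mathbb N$ for large $q$, so $2^{e}$ is a natural number, which forces $e\ge0$.

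The only point needing care is the step ``a nonzero polynomial is eventually nonzero along $\mathrm U_q$'', which follows from the finiteness of its root set. Everything else is read off from Corollary~\ref{cor:power-sequence-obst-cof}.
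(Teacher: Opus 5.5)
Your proof is correct and follows essentially the same route as the paper. You dispose of $Q=0$ via $\operatorname{obst}_*(0)=3$, use the odd-part polynomial $2^{-e}T^{-h}Q(T)$ from Corollary~\ref{cor:power-sequence-obst-cof}, split on whether it is identically $1$ or differs from $1$ as a polynomial (so that equality with $1$ occurs for only finitely many $q$), and settle the case $h=0$ via $2^e\in\mathbb N$ exactly as the paper does.
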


\begin{proof}
If $Q=0$, then $\operatorname{obst}_*(Q(\mathrm U_q))=3\ne1$. Suppose $Q\ne0$ and, using the
notation of Corollary~\ref{cor:power-sequence-obst-cof}, put
\[
O_Q(T)=2^{-e}T^{-h}Q(T)\in\mathbb D[T].
\]
For sufficiently large $q$, $O_Q(\mathrm U_q)$ is a positive odd integer. Hence
$Q(\mathrm U_q)$ is a power of two exactly when $O_Q(\mathrm U_q)=1$. If $O_Q-1$ is the
zero polynomial, then $Q(T)=2^{e}T^{h}$; if $h=0$, the assumption that
$Q(\mathrm U_q)\in\mathbb N$ gives $2^e\in\mathbb N$, and hence $e\ge0$.
Otherwise $O_Q-1$ has only finitely many roots, so
$O_Q(\mathrm U_q)\ne1$ for all sufficiently large $q$.
\end{proof}

\begin{lem}[Stability of $\operatorname{po}$ along the power-of-two sequence]\label{lem:power-sequence-po}
Let $Q(T)\in\mathbb D[T]$, and suppose $Q(\mathrm U_q)\in\mathbb N$ for all
sufficiently large $q$. Then there exist $a\in\mathbb Z$ and
$h\in\mathbb N$ such that, for all sufficiently large $q$,
\[
\operatorname{po}^{\mathfrak N_{\mathrm{wit}}^{(0)}}(Q(\mathrm U_q))
=2^{a}\mathrm U_q^{h}.
\]
\end{lem}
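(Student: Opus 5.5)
The plan is to look at the leading (highest-degree) term of $Q$, since $\operatorname{po}(x)$ is determined by the size of $x+1$. Put $P(T)=Q(T)+1\in\mathbb D[T]$, so that $\operatorname{po}(Q(\mathrm U_q))$ is the largest power of two not exceeding $P(\mathrm U_q)$. Because $Q(\mathrm U_q)\in\mathbb N$ for large $q$, we have $P(\mathrm U_q)\ge1$ for large $q$. In particular $P\ne0$, and its leading coefficient $b_d$ (degree $d$) is positive: if $b_d$ were negative, $P(\mathrm U_q)\to-\infty$ when $d\ge1$, and if $d=0$ then $P=b_0\ge1$ anyway.

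First dispose of the case $d=0$. Then $P=b_0$ is a constant with $b_0\ge1$ and $b_0\in\mathbb N$, since $Q(\mathrm U_q)=b_0-1\in\mathbb N$. Hence $\operatorname{po}$ is the constant $2^{a}$ with $a=\lfloor\log_2 b_0\rfloor\ge0$, and we take $h=0$.

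For $d\ge1$, write $b_d=2^{a}\beta$ with $a=\lfloor\log_2 b_d\rfloor\in\mathbb Z$ and $1\le\beta<2$. We have $P(\mathrm U_q)=b_d\mathrm U_q^{d}+R(\mathrm U_q)$ with $\deg R<d$, so $|R(\mathrm U_q)|\le C\,\mathrm U_q^{d-1}$ for large $q$. Two cases arise.

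If $\beta>1$, then for large $q$ the error $C\mathrm U_q^{d-1}$ is less than $\min(\beta-1,2-\beta)\,2^{a}\mathrm U_q^{d}$. Hence
\[
2^{a}\mathrm U_q^{d}<P(\mathrm U_q)<2^{a+1}\mathrm U_q^{d}.
\]
For large $q$ we have $a+qd\ge0$, so $2^a\mathrm U_q^d$ is a natural power of two, and therefore $\operatorname{po}(Q(\mathrm U_q))=2^{a}\mathrm U_q^{d}$.

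If $\beta=1$, the leading term is exactly $2^{a}\mathrm U_q^{d}$, and the outcome depends on the sign of $R(\mathrm U_q)$ for large $q$. That sign is eventually constant, being the sign of the leading coefficient of $R$ when $R\ne0$.
\begin{itemize}
\item If $R=0$ or $R$ is eventually positive, then $2^a\mathrm U_q^d\le P(\mathrm U_q)<2^{a+1}\mathrm U_q^d$ for large $q$, giving $h=d$ with exponent $a$.
\item If $R$ is eventually negative, then $2^{a-1}\mathrm U_q^{d}<P(\mathrm U_q)<2^{a}\mathrm U_q^{d}$ for large $q$, since $|R|=o(\mathrm U_q^d)$. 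This gives $\operatorname{po}=2^{a-1}\mathrm U_q^{d}$.
\end{itemize}
In both subcases the value is a natural power of two once $q$ is large.

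The main obstacle is this boundary case $\beta=1$, where the leading term sits exactly on a power of two and lower-order terms decide the answer. The eventual sign of a nonzero dyadic polynomial evaluated at $\mathrm U_q\to\infty$ settles it. Throughout, "sufficiently large $q$" is taken as the maximum of the finitely many thresholds used above.
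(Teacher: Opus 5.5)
Your proposal is correct and follows essentially the same route as the paper: pass to $Q+1$, dispose of the constant case, and for degree $d\ge1$ split according to whether the leading coefficient lies strictly between consecutive powers of two or equals one. In the latter case the eventual sign of the lower-order part $R$ decides between $2^{a}\mathrm U_q^{d}$ and $2^{a-1}\mathrm U_q^{d}$, and the only differences from the paper (your normalization $b_d=2^a\beta$ and the explicit bound $|R(\mathrm U_q)|\le C\,\mathrm U_q^{d-1}$ instead of $R(\mathrm U_q)/\mathrm U_q^{d}\to0$) are cosmetic.
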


\begin{proof}
Put $A(T)=Q(T)+1$. If $A$ is constant, then $Q(\mathrm U_q)$ is a constant natural
number and $\operatorname{po}(Q(\mathrm U_q))$ is a constant power of two. Thus,
for some integer $a\ge0$,
\[
\operatorname{po}(Q(\mathrm U_q))=2^{a}=2^{a}\mathrm U_q^{0}.
\]

Suppose $d=\deg A\ge1$. Since $A(\mathrm U_q)=Q(\mathrm U_q)+1>0$ for sufficiently large
$q$, the leading coefficient $b$ of $A$ is positive. Write
\[
A(T)=bT^d+R(T),
\qquad
\deg R<d.
\]
Then
\[
\frac{R(\mathrm U_q)}{\mathrm U_q^d}\longrightarrow0
\qquad(q\longrightarrow\infty).
\]
Choose an integer $a$ such that
\[
2^{a}\le b<2^{a+1}.
\]
After increasing $q$ if necessary, assume
\[
a-1+qd\ge0.
\]
This makes the exponent in $2^{a-1}\mathrm U_q^{d}=2^{a-1+qd}$ nonnegative. It also
gives $a+qd\ge0$, so both $2^{a}\mathrm U_q^{d}$ and $2^{a-1}\mathrm U_q^{d}$ below are
powers of two among the standard natural numbers.

First suppose $2^{a}<b<2^{a+1}$. Put
\[
\delta=\min\{b-2^{a},\,2^{a+1}-b\}>0.
\]
For all sufficiently large $q$,
\[
\left|\frac{R(\mathrm U_q)}{\mathrm U_q^d}\right|<\delta,
\]
and hence
\[
2^{a}\mathrm U_q^{d}<A(\mathrm U_q)<2^{a+1}\mathrm U_q^{d}.
\]
Therefore $\operatorname{po}(Q(\mathrm U_q))=2^{a}\mathrm U_q^{d}$.

Now suppose $b=2^{a}$. Then
\[
A(T)=2^{a}T^{d}+R(T),
\qquad
\deg R<d.
\]
If $R=0$, then $A(\mathrm U_q)=2^{a}\mathrm U_q^{d}$ and
$\operatorname{po}(Q(\mathrm U_q))=2^{a}\mathrm U_q^{d}$. If $R\ne0$, the sign of
$R(\mathrm U_q)$ is constant for all sufficiently large $q$.

If $R(\mathrm U_q)>0$ eventually, then $R(\mathrm U_q)/\mathrm U_q^d\to0$, so for sufficiently
large $q$,
\[
0<\frac{R(\mathrm U_q)}{\mathrm U_q^d}<2^{a},
\]
and hence
\[
2^{a}\mathrm U_q^{d}<A(\mathrm U_q)<2^{a+1}\mathrm U_q^{d}.
\]
Thus $\operatorname{po}(Q(\mathrm U_q))=2^{a}\mathrm U_q^{d}$.

If $R(\mathrm U_q)<0$ eventually, then for sufficiently large $q$,
\[
\left|\frac{R(\mathrm U_q)}{\mathrm U_q^d}\right|<2^{a-1}.
\]
Therefore
\[
2^{a-1}\mathrm U_q^{d}<A(\mathrm U_q)<2^{a}\mathrm U_q^{d},
\]
and $\operatorname{po}(Q(\mathrm U_q))=2^{a-1}\mathrm U_q^{d}$.
\end{proof}

\subsection{Polynomial representation of $\mathcal L_{\mathrm{wit}}$-terms along the power-of-two sequence}
\begin{lem}[Polynomial representation of terms along the power-of-two sequence]\label{lem:full-language-power-sequence-polynomial}
For every $\mathcal L_{\mathrm{wit}}$-term $t(x)$ that is either closed or
has $x$ as its only free variable, there exist a polynomial
\[
\mathrm Q_t(T)\in\mathbb D[T]
\]
and a natural number $q_t$ such that, for every $q\ge q_t$,
\[
t^{\mathfrak N_{\mathrm{wit}}^*}(\mathrm c_q)=\mathrm Q_t(\mathrm U_q).
\]
\end{lem}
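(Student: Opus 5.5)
We argue by structural induction on $t$, strengthening the claim so that each $\mathrm Q_t$ takes natural-number values at $\mathrm U_q$ for $q\ge q_t$. This holds automatically, since $t^{\mathfrak N_{\mathrm{wit}}^*}(\mathrm c_q)\in\mathbb N$. The extra information is what lets us apply Lemma~\ref{lem:power-sequence-valuation}, Corollaries~\ref{cor:power-sequence-obst-cof} and~\ref{cor:power-sequence-power-dichotomy}, and Lemma~\ref{lem:power-sequence-po} to subterms.

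\textbf{Base cases.} For the constants we take $\mathrm Q_0=0$ and $\mathrm Q_1=1$. For the variable we take $\mathrm Q_x=\frac54T$, which lies in $\mathbb D[T]$ and satisfies $\mathrm Q_x(\mathrm U_q)=\mathrm c_q$ for $q\ge2$.

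\textbf{Ring operations.} For $+$ and $\cdot$ we add or multiply the polynomials and take the maximum of the thresholds.

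\textbf{$\operatorname{sub}$.} Given $\mathrm Q_{s}$ and $\mathrm Q_{u}$, the difference $\mathrm Q_s-\mathrm Q_u$ is either zero or has eventually constant sign at $\mathrm U_q\to\infty$, since it is a nonzero polynomial in $\mathrm U_q$. Hence eventually either $s\ge u$, giving $\mathrm Q_{\operatorname{sub}(s,u)}=\mathrm Q_s-\mathrm Q_u$, or $s<u$, giving $\mathrm Q_{\operatorname{sub}(s,u)}=0$.

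\textbf{$\operatorname{half}$.} The parity of $\mathrm Q_s(\mathrm U_q)$ is eventually constant. If $\mathrm Q_s=0$ this is trivial. Otherwise Lemma~\ref{lem:power-sequence-valuation} gives $\nu_2=e+qh$, which is eventually positive if $h\ge1$, and equal to the constant $e$ if $h=0$. So eventually either $\mathrm Q_{\operatorname{half}(s)}=\frac12\mathrm Q_s$, or $\mathrm Q_{\operatorname{half}(s)}=0$.

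\textbf{$\operatorname{obst}$ and $\operatorname{cof}$.} These are given directly by Corollary~\ref{cor:power-sequence-obst-cof}. In the zero case the values are the constants $3$ and $0$.

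\textbf{$\operatorname{po}$.} This is Lemma~\ref{lem:power-sequence-po}. The value $2^a\mathrm U_q^h$ is the evaluation of $2^aT^h\in\mathbb D[T]$; note that $a$ may be negative.

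\textbf{$\operatorname{quo}(s,u)$.} This is the main case to check. By Corollary~\ref{cor:power-sequence-power-dichotomy}, the truth of $\operatorname{obst}_*(\mathrm Q_s(\mathrm U_q))=1$ is eventually constant, and likewise for $u$. The comparison $s\le u$ is eventually constant by the sign argument above. If the guard eventually fails, we take $\mathrm Q=0$. If it eventually holds, the corollary gives $\mathrm Q_s=2^{e}T^{h}$ and $\mathrm Q_u=2^{e'}T^{h'}$. The quotient is then $2^{e'-e}\mathrm U_q^{h'-h}$. We must show $h'\ge h$ so that this is a polynomial. If $h'<h$, then $2^{e'}\mathrm U_q^{h'}<2^{e}\mathrm U_q^{h}$ for large $q$, contradicting $s\le u$. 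Hence $h'\ge h$ and $\mathrm Q_{\operatorname{quo}(s,u)}=2^{e'-e}T^{h'-h}\in\mathbb D[T]$.

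In every case the threshold $q_t$ is the maximum of the subterms' thresholds and the finitely many ``sufficiently large'' bounds introduced at that step. Closed terms are covered by the same induction and yield constant polynomials.

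The only real obstacle is this bookkeeping: each case distinction must be eventually stable, which is exactly what the earlier stability results and the eventual-sign property of nonzero polynomials in $\mathrm U_q$ provide.
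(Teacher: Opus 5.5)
Your proof is correct and follows the paper's route: structural induction on terms. Eventual sign stability of nonzero polynomials at $\mathrm U_q$ handles $\operatorname{sub}$ and the comparison in $\operatorname{quo}$, Lemma~\ref{lem:power-sequence-valuation} settles the parity for $\operatorname{half}$, and Lemma~\ref{lem:power-sequence-po} and the two corollaries cover $\operatorname{po}$, $\operatorname{obst}$, $\operatorname{cof}$, and the power-of-two guards in $\operatorname{quo}$ (your explicit argument for $h'\ge h$ just spells out the step the paper states in one line).
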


\begin{proof}
We argue by induction on the construction of terms. When treating a
compound term, apply the induction hypothesis to all its proper subterms.
Thus we may take $q$ sufficiently large that, simultaneously for every
proper subterm $u$,
\[
u^{\mathfrak N_{\mathrm{wit}}^*}(\mathrm c_q)=\mathrm Q_u(\mathrm U_q)
\]
holds.

For the terms $0,1,x$, take
\[
\mathrm Q_0(T)=0,
\qquad
\mathrm Q_1(T)=1,
\qquad
\mathrm Q_x(T)=\frac54T.
\]
For addition and multiplication, take the sum and product of the
corresponding polynomials.

Suppose $t=\operatorname{sub}(u,v)$. By the induction hypothesis, for all
sufficiently large $q$,
\[
u(\mathrm c_q)-v(\mathrm c_q)=(\mathrm Q_u-\mathrm Q_v)(\mathrm U_q).
\]
Both sides are integers and have the same sign, so
\[
v(\mathrm c_q)\le u(\mathrm c_q)
\quad\Longleftrightarrow\quad
(\mathrm Q_u-\mathrm Q_v)(\mathrm U_q)\ge0.
\]
Either $\mathrm Q_u-\mathrm Q_v$ is the zero polynomial, or its value at $\mathrm U_q$ has constant
sign for all sufficiently large $q$. Thus the truth value of this condition
is eventually constant. If it is true, take
\[
\mathrm Q_t=\mathrm Q_u-\mathrm Q_v,
\]
and if it is false, take $\mathrm Q_t=0$.

If $t=\operatorname{po}(u)$, the induction hypothesis gives
$\mathrm Q_u(\mathrm U_q)=u(\mathrm c_q)\in\mathbb N$ for all sufficiently large $q$. The required
polynomial is supplied by Lemma~\ref{lem:power-sequence-po} applied to $\mathrm Q_u$.

Suppose $t=\operatorname{quo}(s,r)$. By the induction hypothesis and
Corollary~\ref{cor:power-sequence-power-dichotomy}, the truth values of the two
equations
\[
\operatorname{obst}(s(\mathrm c_q))=1,
\qquad
\operatorname{obst}(r(\mathrm c_q))=1
\]
are eventually constant. Moreover, for all
sufficiently large $q$,
\[
s(\mathrm c_q)\le r(\mathrm c_q)
\quad\Longleftrightarrow\quad
(\mathrm Q_r-\mathrm Q_s)(\mathrm U_q)\ge0,
\]
so this truth value is eventually constant as well. If any condition in
the definition of $\operatorname{quo}$ is eventually false, take $\mathrm Q_t=0$.
If they are all eventually true, the same corollary gives
\[
\mathrm Q_s(T)=2^{a_s}T^{h_s},
\qquad
\mathrm Q_r(T)=2^{a_r}T^{h_r}.
\]
Since $s(\mathrm c_q)\le r(\mathrm c_q)$ eventually, $h_r\ge h_s$. Set
\[
\mathrm Q_t(T)=2^{a_r-a_s}T^{h_r-h_s}.
\]
Then, for all sufficiently large $q$,
\[
\mathrm Q_t(\mathrm U_q)\,\mathrm Q_s(\mathrm U_q)=\mathrm Q_r(\mathrm U_q),
\]
and hence $t(\mathrm c_q)=\mathrm Q_t(\mathrm U_q)$.

Suppose $t=\operatorname{half}(u)$. By the induction hypothesis,
$u(\mathrm c_q)=\mathrm Q_u(\mathrm U_q)\in\mathbb N$ for all sufficiently large $q$.
If $\mathrm Q_u=0$, take $\mathrm Q_t=0$. If $\mathrm Q_u\ne0$,
Lemma~\ref{lem:power-sequence-valuation} shows that the parity of $\mathrm Q_u(\mathrm U_q)$
is eventually constant. If it is eventually odd, take $\mathrm Q_t=0$. If it is
eventually even, then $\operatorname{half}(u(\mathrm c_q))=u(\mathrm c_q)/2$, so take
\[
\mathrm Q_t(T)=\frac12\mathrm Q_u(T)\in\mathbb D[T].
\]

If $t=\operatorname{obst}(u)$ or $t=\operatorname{cof}(u)$, the induction
hypothesis gives $\mathrm Q_u(\mathrm U_q)=u(\mathrm c_q)\in\mathbb N$ for all sufficiently large
$q$. The result follows by applying
Corollary~\ref{cor:power-sequence-obst-cof} to $\mathrm Q_u$.

Every term has only finitely many subterms. Taking $q_t$ to be the maximum
of the finitely many bounds appearing above makes all conditions hold
simultaneously.
\end{proof}

\begin{lem}[Finite avoidance for $\mathcal L_{\mathrm{wit}}$-terms]\label{lem:full-language-finite-avoidance}
Let
\[
t_1(x),\ldots,t_m(x)
\]
be finitely many $\mathcal L_{\mathrm{wit}}$-terms that are either closed
or have $x$ as their only free variable. Then there is a standard natural
number $c$ such that, in $\mathfrak N_{\mathrm{wit}}^*$, for every
$1\le i\le m$ and $j\in\{0,1,2\}$,
\[
c\ne3t_i(c)+j.
\]
\end{lem}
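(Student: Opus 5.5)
We take $c=\mathrm c_q=5\cdot2^{q-2}$ for a single sufficiently large $q$. By Lemma~\ref{lem:full-language-power-sequence-polynomial}, each $t_i$ has a polynomial $\mathrm Q_{t_i}(T)\in\mathbb D[T]$ and a bound $q_{t_i}$ with $t_i^{\mathfrak N_{\mathrm{wit}}^*}(\mathrm c_q)=\mathrm Q_{t_i}(\mathrm U_q)$ for $q\ge q_{t_i}$. We first take $q\ge\max_i q_{t_i}$. It then suffices to show that, for each fixed $i$ and $j$, the equation
\[
\tfrac54\mathrm U_q=3\,\mathrm Q_{t_i}(\mathrm U_q)+j
\]
fails for all sufficiently large $q$. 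There are only $3m$ pairs $(i,j)$, so the maximum of the resulting finitely many bounds gives the required $q$, and $c=\mathrm c_q$ works.

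Fix $i$ and $j$ and consider the polynomial
\[
P_{i,j}(T)=\tfrac54T-3\,\mathrm Q_{t_i}(T)-j\in\mathbb D[T].
\]
If $P_{i,j}\ne0$, it has finitely many roots, so $P_{i,j}(\mathrm U_q)\ne0$ for large $q$, and we are done.

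The only real point, and the expected obstacle, is to rule out $P_{i,j}=0$ identically. In that case $\mathrm Q_{t_i}(T)=\tfrac{5}{12}T-\tfrac j3$. This polynomial does not lie in $\mathbb D[T]$, because the coefficient $5/12$ has a factor $3$ in its reduced denominator and so is not a dyadic rational. This contradicts $\mathrm Q_{t_i}\in\mathbb D[T]$. The argument is purely algebraic: comparing coefficients of $T$ gives $3a=5/4$ with $a\in\mathbb D$, which is impossible since $3$ is not a unit in $\mathbb D$.

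So the proof is: coefficient comparison shows each $P_{i,j}$ is nonzero, finiteness of roots gives a bound for each pair, and we take $q$ beyond all bounds together with $\max_i q_{t_i}$. For closed terms $t_i$ the same argument applies, since their polynomials are also in $\mathbb D[T]$ by the same lemma.
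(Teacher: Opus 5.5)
Your proof is correct and is essentially the paper's argument: take $c=\mathrm c_q$, use the polynomial representation lemma, and observe that $\frac54T-3\mathrm Q_{t_i}(T)-j$ is a nonzero polynomial because $5/12\notin\mathbb D$, so it has only finitely many roots. One small slip: the aside ``since $3$ is not a unit in $\mathbb D$'' is not by itself a valid reason, since the real point is that $3\nmid 5$ in $\mathbb D$; your primary reason, the factor $3$ in the reduced denominator of $5/12$, is exactly right.
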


\begin{proof}
By Lemma~\ref{lem:full-language-power-sequence-polynomial}, choose
$\mathrm Q_{t_i}(T)\in\mathbb D[T]$ for each $i$. For each $i$ and
$j\in\{0,1,2\}$, put
\[
R_{i,j}(T)
=\frac54T-3\mathrm Q_{t_i}(T)-j
\in\mathbb D[T].
\]

$R_{i,j}$ is nonzero. Indeed, if $R_{i,j}=0$, then
\[
\mathrm Q_{t_i}(T)=\frac{5}{12}T-\frac j3.
\]
But $5/12\notin\mathbb D$, contradicting
$\mathrm Q_{t_i}(T)\in\mathbb D[T]$.

Each of the finitely many nonzero polynomials $R_{i,j}$ has only finitely
many roots. We may therefore choose $q$ sufficiently large that
$t_i(\mathrm c_q)=\mathrm Q_{t_i}(\mathrm U_q)$ for every $i$, and also
\[
R_{i,j}(\mathrm U_q)\ne0,
\]
for every $i,j$. For $c=\mathrm c_q$ we then have
\[
c-3t_i(c)-j
=R_{i,j}(\mathrm U_q)\ne0.
\]
\end{proof}

\section{Compactness and the generated substructure}

We combine the finite avoidance lemma with the Compactness Theorem to
obtain an element satisfying the avoidance conditions for all terms
simultaneously. This yields a model containing an element for which no
quotient and remainder exist upon division by $3$.

Let $\operatorname{Th}_{\forall}(\mathfrak N_0)$ denote the set of all
universal $\mathcal L_0$-sentences true in the standard model
$\mathfrak N_0$. The construction below preserves this entire universal
theory.

Let $\mathcal L_{\mathrm{wit}}(\mathsf c)$ be the language obtained from
$\mathcal L_{\mathrm{wit}}$ by adding a new constant symbol $\mathsf c$.
Let $\mathcal T_x$ be the set of all $\mathcal L_{\mathrm{wit}}$-terms that
are either closed or have $x$ as their only free variable. For
$t\in\mathcal T_x$, let $t(\mathsf c)$ denote the closed term obtained by
replacing $x$ with $\mathsf c$. Consider the following
$\mathcal L_{\mathrm{wit}}(\mathsf c)$-theory:
\[
\begin{split}
T_{\mathrm{avoid}}
=\;&\mathrm{PA}_{\mathrm{wit}}^{-}\\
&\cup\operatorname{Th}_{\forall}(\mathfrak N_0)\\
&\cup
\{\mathsf c\ne3t(\mathsf c)+j\mid
  t\in\mathcal T_x, j\in\{0,1,2\}\}.
\end{split}
\]

\begin{thm}[A model in which division by $3$ fails]\label{thm:mod-three-gap}
There exist a model $\mathcal A$ of
$\mathrm{PA}_{\mathrm{smu}}^{-}\cup
\operatorname{Th}_{\forall}(\mathfrak N_0)$
and an element $d\in A$ such that, for every $a\in A$ and
$j\in\{0,1,2\}$,
\[
d\ne3a+j.
\]
\end{thm}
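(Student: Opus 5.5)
We will show that $T_{\mathrm{avoid}}$ is consistent, take a model of it, and pass to the substructure generated by the interpretation of $\mathsf c$. Throughout, $\mathfrak N_{\mathrm{wit}}^*$ from Lemma~\ref{lem:canonical-obst-cof-extension} serves as the source of finite models.

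\textbf{Step 1: finite satisfiability.} Let $\Sigma\subseteq T_{\mathrm{avoid}}$ be finite. Then $\Sigma$ mentions only finitely many avoidance sentences $\mathsf c\ne3t_i(\mathsf c)+j$ with $t_1,\dots,t_m\in\mathcal T_x$. Lemma~\ref{lem:full-language-finite-avoidance} gives a standard $c$ with $c\ne3t_i(c)+j$ in $\mathfrak N_{\mathrm{wit}}^*$ for all $i\le m$ and $j\in\{0,1,2\}$. Interpret $\mathsf c$ as $c$ in $\mathfrak N_{\mathrm{wit}}^*$. This structure satisfies:
\begin{itemize}
\item $\mathrm{PA}_{\mathrm{wit}}^{-}$, by Lemma~\ref{lem:canonical-obst-cof-extension};
\item $\operatorname{Th}_{\forall}(\mathfrak N_0)$, since its $\mathcal L_0$-reduct is $\mathfrak N_0$;
\item the chosen avoidance sentences.
\end{itemize}
So $\Sigma$ has a model, and by the Compactness Theorem $T_{\mathrm{avoid}}$ has a model $\mathcal M$.

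\textbf{Step 2: the generated substructure.} Let $A$ be the set of values in $\mathcal M$ of closed $\mathcal L_{\mathrm{wit}}(\mathsf c)$-terms, i.e.\ the values $t(\mathsf c)^{\mathcal M}$ for $t\in\mathcal T_x$. Every closed term in this language is of the form $t(\mathsf c)$ for some $t\in\mathcal T_x$ (replace $\mathsf c$ by $x$), so $A$ contains the constants and is closed under all function symbols of $\mathcal L_{\mathrm{wit}}$. Hence $A$ is the universe of a substructure $\mathcal A^{+}$ of $\mathcal M$. All axioms of $\mathrm{PA}_{\mathrm{wit}}^{-}$ and all sentences of $\operatorname{Th}_{\forall}(\mathfrak N_0)$ are universal, so they pass down to $\mathcal A^{+}$ (see the remark on substructures after Lemma~\ref{lem:reduct}). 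By Lemma~\ref{lem:reduct}(1), the $\mathcal L_0$-reduct $\mathcal A$ is a model of $\mathrm{PA}_{\mathrm{smu}}^{-}\cup\operatorname{Th}_{\forall}(\mathfrak N_0)$.

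\textbf{Step 3: failure of division by $3$.} Let $d=\mathsf c^{\mathcal M}\in A$, and let $a\in A$ be arbitrary. Then $a=t(\mathsf c)^{\mathcal M}$ for some $t\in\mathcal T_x$. For each $j\in\{0,1,2\}$, $\mathcal M$ satisfies $\mathsf c\ne3t(\mathsf c)+j$. Since $+$, $\cdot$, and the numerals are computed identically in $\mathcal A$ and $\mathcal M$, we get $d\ne3a+j$ in $\mathcal A$.

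The substantive work is already in Lemma~\ref{lem:full-language-finite-avoidance}. The only point needing care here is that the witness functions stay inside $A$. The obstruction divisor chosen in $\mathcal M$ might not be the "canonical" one, but this does not matter: $\operatorname{obst}$ is a function symbol, so $A$ is closed under it, and Lemma~\ref{lem:power-test-correct} applies inside $\mathcal A^{+}$ to give $\operatorname{pow}_2$ correctly relative to $A$.
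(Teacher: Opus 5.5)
Your proposal is correct and follows the paper's proof essentially step for step. The steps are: finite satisfiability of $T_{\mathrm{avoid}}$ via Lemma~\ref{lem:full-language-finite-avoidance} in $\mathfrak N_{\mathrm{wit}}^*$; compactness; passage to the substructure with universe $\{t(\mathsf c)^{\mathcal M}\mid t\in\mathcal T_x\}$; preservation of the universal axioms; and Lemma~\ref{lem:reduct}(1) for the $\mathcal L_0$-reduct. Your closing remark that a non-canonical $\operatorname{obst}$ in $\mathcal M$ is harmless is also right, since $\operatorname{pow}_2$ is re-evaluated inside $A$ and Lemma~\ref{lem:power-test-correct} holds in every model of $\mathrm{PA}_{\mathrm{wit}}^{-}$.
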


\begin{proof}
Let $T_0$ be any finite subset of $T_{\mathrm{avoid}}$, and list the
finitely many terms appearing in its avoidance conditions as
\[
t_1(x),\ldots,t_m(x).
\]

By Lemma~\ref{lem:full-language-finite-avoidance}, there is a standard
natural number $c$ which, in $\mathfrak N_{\mathrm{wit}}^*$, satisfies all
avoidance conditions occurring in $T_0$; that is, for every
$1\le i\le m$ and $j\in\{0,1,2\}$,
\[
c\ne3t_i(c)+j.
\]
Interpreting $\mathsf c$ as this $c$ makes
$\mathfrak N_{\mathrm{wit}}^*$ a model of $T_0$, since its
$\mathcal L_0$-reduct is $\mathfrak N_0$ and therefore satisfies every
sentence in $T_0\cap\operatorname{Th}_{\forall}(\mathfrak N_0)$. Hence
$T_{\mathrm{avoid}}$ is finitely satisfiable.

By the Compactness Theorem, $T_{\mathrm{avoid}}$ has a model
\[
(\mathcal M,d),
\]
where $d=\mathsf c^{\mathcal M}$.

Let $\mathcal H$ be the $\mathcal L_{\mathrm{wit}}$-substructure of
$\mathcal M$ generated by $d$. Its underlying set is
\[
H=\{t^{\mathcal M}(d)\mid t\in\mathcal T_x\}.
\]
Since $x\in\mathcal T_x$, we have $d\in H$. Moreover, all axioms of
$\mathrm{PA}_{\mathrm{wit}}^{-}$ and all sentences in
$\operatorname{Th}_{\forall}(\mathfrak N_0)$ are universal. Hence
$\mathcal H$ is also a model of
$\mathrm{PA}_{\mathrm{wit}}^{-}\cup
\operatorname{Th}_{\forall}(\mathfrak N_0)$.

Take $a\in H$. For some $\mathcal L_{\mathrm{wit}}$-term $t(x)$,
\[
a=t^{\mathcal M}(d).
\]
By the definition of $T_{\mathrm{avoid}}$, for every $j\in\{0,1,2\}$,
\[
d\ne3t^{\mathcal M}(d)+j.
\]
Since $a=t^{\mathcal M}(d)$,
\[
3t^{\mathcal M}(d)+j=3a+j,
\]
and therefore
\[
d\ne3a+j.
\]
These relations continue to hold in the substructure $\mathcal H$.

Finally, let $\mathcal A$ be the $\mathcal L_0$-reduct of $\mathcal H$.
Since $\mathcal H\models\mathrm{PA}_{\mathrm{wit}}^{-}$,
Lemma~\ref{lem:reduct} gives
$\mathcal A\models\mathrm{PA}_{\mathrm{smu}}^{-}$. Moreover,
$\mathcal H\models\operatorname{Th}_{\forall}(\mathfrak N_0)$, and these
sentences belong to $\mathcal L_0$, so they remain true in the reduct
$\mathcal A$. Hence
\[
\mathcal A\models
\mathrm{PA}_{\mathrm{smu}}^{-}
\cup\operatorname{Th}_{\forall}(\mathfrak N_0).
\]
Together with the avoidance property established above, this completes the
proof.
\end{proof}

\begin{cor}[Non-derivability of the Standard Euclidean Division Principle]\label{cor:standard-division-not-provable}
\[
\mathrm{PA}_{\mathrm{smu}}^{-}
\cup\operatorname{Th}_{\forall}(\mathfrak N_0)
\nvdash\mathrm{pa16}_3.
\]
Consequently, this stronger theory does not prove the scheme pa16; in
particular, neither does $\mathrm{PA}_{\mathrm{smu}}^{-}$.
\end{cor}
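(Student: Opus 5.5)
The corollary follows directly from Theorem~\ref{thm:mod-three-gap} by soundness. Take the model $\mathcal A$ of $\mathrm{PA}_{\mathrm{smu}}^{-}\cup\operatorname{Th}_{\forall}(\mathfrak N_0)$ and the element $d\in A$ supplied by the theorem. I will argue that $\mathcal A\not\models\mathrm{pa16}_3$. Suppose instead that it held. Applying it to $x=d$ would give $q,r\in A$ with $r<3$ and $d=q\cdot3+r$.

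The only subtle point is converting the remainder $r$ into one of the constants. Here I will use the elementary consequence of pa1--pa13 recorded in Section~2, namely $r<3\rightarrow(r=0\lor r=1\lor r=2)$, which holds in $\mathcal A$. Then $r$ equals the interpretation of some $j\in\{0,1,2\}$. By pa5 we have $q\cdot3=3\cdot q$, and hence $d=3q+j$. This contradicts the defining property of $d$ from Theorem~\ref{thm:mod-three-gap}.

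Hence $\mathcal A\models\neg\mathrm{pa16}_3$. Since $\mathcal A$ satisfies $\mathrm{PA}_{\mathrm{smu}}^{-}\cup\operatorname{Th}_{\forall}(\mathfrak N_0)$, the Soundness Theorem gives $\mathrm{PA}_{\mathrm{smu}}^{-}\cup\operatorname{Th}_{\forall}(\mathfrak N_0)\nvdash\mathrm{pa16}_3$.

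The remaining claims are immediate. Since $\mathrm{pa16}_3$ is an instance of the scheme pa16, any derivation of pa16 would yield one of $\mathrm{pa16}_3$, so the stronger theory does not prove pa16. A fortiori, the subtheory $\mathrm{PA}_{\mathrm{smu}}^{-}$ proves neither $\mathrm{pa16}_3$ nor pa16.

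I expect no real obstacle here. The only care needed is to check that the remainder reduction $r<3\Rightarrow r\in\{0,1,2\}$ and commutativity of multiplication are available from pa1--pa13, and both are stated in or follow from the earlier material.
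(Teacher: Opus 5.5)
Your proposal is correct and follows essentially the same route as the paper. You take $\mathcal A$ and $d$ from Theorem~\ref{thm:mod-three-gap}, reduce $r<3$ to $r\in\{0,1,2\}$, use pa5 to match the avoidance condition, conclude $\mathcal A\not\models\mathrm{pa16}_3$ by soundness, and pass to the scheme pa16 and to the subtheory $\mathrm{PA}_{\mathrm{smu}}^{-}$ as immediate consequences.
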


\begin{proof}
Take the model $\mathcal A$ and element $d$ from
Theorem~\ref{thm:mod-three-gap}. Let $q,r\in A$ and suppose that $r<3$.
Then $r$ is one of $0,1,2$. The theorem's avoidance condition and pa5
therefore give
\[
d\ne q\cdot3+r.
\]
Thus $\mathrm{pa16}_3$ is false in $\mathcal A$, and the Soundness Theorem
implies that it is not provable from
$\mathrm{PA}_{\mathrm{smu}}^{-}\cup
\operatorname{Th}_{\forall}(\mathfrak N_0)$.
Since even the $n=3$ instance is unprovable,
this stronger theory does not prove the scheme pa16.
\end{proof}

\begin{cor}[Non-derivability of the Euclidean Division Principle]\label{cor:division-not-provable}
\[
\mathrm{PA}_{\mathrm{smu}}^{-}
\cup\operatorname{Th}_{\forall}(\mathfrak N_0)
\nvdash\mathrm{pa17}.
\]
In particular, $\mathrm{PA}_{\mathrm{smu}}^{-}\nvdash\mathrm{pa17}$.
\end{cor}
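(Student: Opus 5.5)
The plan is to use the same model $\mathcal A$ and element $d$ supplied by Theorem~\ref{thm:mod-three-gap}, and to show that pa17 fails in $\mathcal A$ at the instance $x=d$, $y=3$. Soundness then does the rest, exactly as in Corollary~\ref{cor:standard-division-not-provable}. In fact, the cleanest route is to observe that pa17 implies $\mathrm{pa16}_3$ over the base axioms, and then invoke Corollary~\ref{cor:standard-division-not-provable} directly.

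Concretely, I will first derive $3\ne0$ from pa1--pa13; this is already listed among the elementary consequences recorded in Section~2. Next, working in any model of $\mathrm{PA}_{\mathrm{smu}}^{-}$ satisfying pa17, I instantiate pa17 with $y:=3$. This gives, for every $x$, elements $q,r$ with $r<3$ and $x=q\cdot3+r$. That is precisely $\mathrm{pa16}_3$, so $\mathrm{PA}_{\mathrm{smu}}^{-}\cup\{\mathrm{pa17}\}\vdash\mathrm{pa16}_3$. Hence if $\mathrm{PA}_{\mathrm{smu}}^{-}\cup\operatorname{Th}_{\forall}(\mathfrak N_0)$ proved pa17, it would prove $\mathrm{pa16}_3$, contradicting Corollary~\ref{cor:standard-division-not-provable}.

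Semantically, the same argument reads as follows. In $\mathcal A$, take $x=d$ and $y=3$. If some $q,r$ satisfied $r<3$ and $d=q\cdot3+r$, then $r\in\{0,1,2\}$, and by pa5 we would have $d=3q+r$. This contradicts the avoidance property of $d$. So pa17 is false in $\mathcal A$.

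The final clause follows because $\mathrm{PA}_{\mathrm{smu}}^{-}$ is a subtheory of $\mathrm{PA}_{\mathrm{smu}}^{-}\cup\operatorname{Th}_{\forall}(\mathfrak N_0)$, so anything it proved would also be proved by the larger theory. There is no real obstacle here. The only point needing care is the verification that $3\ne0$ in the base theory, which makes the hypothesis $y\ne0$ of pa17 hold. This is immediate from pa10 together with $0\le x$ and transitivity: $0<1\le 2<3$.
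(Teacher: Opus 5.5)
Your proposal is correct and matches the paper's proof. Both refute pa17 in the model $\mathcal A$ of Theorem~\ref{thm:mod-three-gap} at the instance $x=d$, $y=3$, using $3\ne0$ and $r<3\rightarrow(r=0\lor r=1\lor r=2)$, and then apply soundness; your syntactic route, deriving $\mathrm{pa16}_3$ from pa17 and citing Corollary~\ref{cor:standard-division-not-provable}, is only a repackaging of the same argument.
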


\begin{proof}
Take the model $\mathcal A$ and element $d$ from
Theorem~\ref{thm:mod-three-gap}. $\mathcal A\models3\ne0$. If pa17 held in
$\mathcal A$, then with $x=d$
and $y=3$ there would be $q,r\in A$ such that
\[
r<3,
\qquad
d=q\cdot3+r.
\]
Since $r$ is one of $0,1,2$, this contradicts the avoidance condition in
Theorem~\ref{thm:mod-three-gap}. Thus pa17 is false
in $\mathcal A$, and the Soundness Theorem implies that it is not provable
from $\mathrm{PA}_{\mathrm{smu}}^{-}\cup
\operatorname{Th}_{\forall}(\mathfrak N_0)$.
\end{proof}

\section*{Acknowledgements}

The author is deeply grateful to Albert Visser for posing the question
addressed in this paper, for carefully reading an earlier version of the
manuscript, and for his valuable comments, suggestions, and encouragement.

\section*{Statement on the Use of AI Tools}

In preparing this paper, the author used OpenAI Codex, based on GPT-5, as an
interactive tool to assist in exploring proof strategies, checking
mathematical arguments, organizing and revising the exposition, and preparing
the English version. The author independently verified all definitions,
proofs, references, and conclusions and takes full responsibility for the
entire content of the paper.

\end{document}